\newtheorem{theorem}{Theorem}[section]        
\newtheorem{lemma}[theorem]{Lemma}
\newtheorem{corollary}[theorem]{Corollary}  
\newtheorem{conjecture}[theorem]{Conjecture}
\newtheorem*{main theorem}{Main Theorem}
\theoremstyle{remark}     
\newtheorem*{rem}{Remark}   
\theoremstyle{definition}  
\newtheorem{definition}[theorem]{Definition}   
\def\N{\mathbb{N}}
\def\R{\mathbb{R}}     
\def\Z{\mathbb{Z}}
\def\norm#1{\|#1\|}
\begin{document}
\title{Lattice Configurations Determining Few Distances} 
\author{Vajresh Balaji, \quad Olivia Edwards, \quad Anne Marie Loftin, \quad Solomon Mcharo, \\ Lo Phillips,  \quad Alex Rice, \quad Bineyam Tsegaye}
 
\begin{abstract} We begin by revisiting a paper of Erd\H{o}s and Fishburn, which posed the following question: given $k\in \N$, what is the maximum number of points in a plane that determine at most $k$ distinct distances, and can such optimal configurations be classified? We rigorously verify claims made in remarks in that paper, including the fact that the vertices of a regular polygon, with or without an additional point at the center, cannot form an optimal configuration for any $k\geq 7$. Further, we investigate configurations in both triangular and rectangular lattices studied by Erd\H{o}s and Fishburn. We collect a large amount of data related to these and other configurations, some of which correct errors in the original paper, and we use that data and additional analysis to provide explanations and make conjectures. 
 
\end{abstract}

\address{Department of Mathematics, Millsaps College, Jackson, MS 39210}
\email{balajv@millsaps.edu} 
\email{edwarof@millsaps.edu} 
\email{loftiam@millsaps.edu}
\email{mcharsk@millsaps.edu}   
 \email{philllg@millsaps.edu} 
\email{riceaj@millsaps.edu} 
\email{tsegabl@millsaps.edu}

\maketitle 
\setlength{\parskip}{5pt}   

\section{Introduction}

In 1996, Erd\H{o}s and Fishburn \cite{EF} posed the following question: given $k\in \N$, what is the maximum number $g(k)$ of points in a plane that determine at most $k$ distinct distances, and can such optimal configurations be classified? Here and throughout we say that a set $E\subseteq\R^2$ \textit{determines} a  distance $\lambda>0$ if there exist two points $P,Q\in E$ satisfying $\norm{P-Q}=\lambda$, where $\norm{\cdot}$ is the standard Euclidean norm. In other words, $E$ determines at most $k$ distances if the cardinality of $\left\{\norm{P-Q}: P,Q\in E, P\neq Q\right\}$ is at most $k$.  By convention and for the remainder of this paper, $0$ is not counted as a distance determined by a set of points. 

The question of Erd\H{o}s and Fishburn can be thought of as a precise, small picture, inverse formulation of the famous Erd\H{o}s distinct distance problem, introduced by Erd\H{o}s \cite{Erdos} 50 years earlier, which concerns the minimum number $f(n)$ of distinct distances determined by $n$ points in a plane. In that original paper, Erd\H{o}s proved via an elementary counting argument that $f(n)=\Omega(\sqrt{n})$, and he conjectured that the correct order of growth is $n/\sqrt{\log n}$, as attained by a square subset of the integer lattice. After decades of incremental progress, the question of the asymptotic behavior of $f(n)$ was effectively resolved in a celebrated result of Guth and Katz \cite{GuthKatz}, who established that $f(n)=\Omega(n/\log n)$. However, the problem of precisely determining $g(k)$ for many $k$, and classifying optimal configurations, which we refer to as the \textit{Erd\H{o}s-Fishburn problem}, is still very much open for business. To aid our exploration, we introduce the following definitions.

\begin{definition} We use \textit{configuration} to refer to finite subsets of $\R^2$ modulo \textit{similarity}, meaning equivalence via scaling and distance-preserving transformation. For $n\in \N$, we let $R_n$ denote the configuration given by the set of vertices of a regular $n$-gon, and we let $R_n^+$ denote $R_n$ with an additional point at the center of the unique circle containing the vertices. For $k\in \N$, we say that a configuration is $k$-\textit{optimal} if it determines at most $k$ distinct distances and contains $g(k)$ points. We say that a configuration is \textit{EF-optimal} if it is $k$-optimal for some $k\in \N$.
\end{definition}

We start by observing that $g(1)=3$, and the only $1$-optimal set is $R_3$. To see this, we fix $U,V\in \R^2$, which after translation and rotation we can assume are $U=(-1,0)$, $V=(1,0)$. We note that in order to add any additional points without determining an additional distance, those points must lie on the circles of radius $2$ centered at $U$ and $V$, respectively. These two circles intersect at two points, $Q=(\sqrt{3},0)$ and $R=(-\sqrt{3},0)$, and since these two points are more than distance $2$ apart, we can only add one of them while maintaining a single distance.

For $k=2$, we see that $g(2)\geq 5$ by considering $R_5$, the vertices of a regular pentagon, but showing equality is nontrivial. More generally, by rotational symmetry, we see that $R_n$ determines $\lfloor n/2\rfloor$ distinct distances, and hence $g(k)\geq 2k+1$ by considering $R_{2k+1}$. Starting with $k=3$, we start to see a new player enter the picture, as both $R_7$ and $R_6^+$ are $7$-point configurations determining $3$ distances. 

Further, $R_6^+$ is particularly compelling, as it is a hexagonal array of points that lie in a lattice forming equilateral triangles. Foreshadowing future discussion, we introduce the following lattices.

\begin{definition} We let $L_{\Delta}=\left\{\left(a+\frac{1}{2}b,\frac{\sqrt{3}}{2}b\right): a,b\in \Z\right\}$, which we refer to as the \textit{triangular lattice}, and we let $L_{\Box}=\left\{(a,b): a,b\in \Z\right\}$ denote the usual integer lattice, which we refer to as the \textit{rectangular lattice}.

\end{definition}

\noindent We now give a synopsis of resolved cases, established by Erd\H{o}s and Fishburn \cite{EF} for $1\leq k \leq 4$, by Shinahara \cite{Shin} for $k=5$, and by Wei \cite{Wei} for $k=6$. The problem remains open for $k\geq 7$.

\begin{theorem} The following results are known for the Erd\H{o}s-Fishburn problem:

\begin{enumerate}[(i)] \item $g(1)=3$, and the only $1$-optimal configuration is $R_3$.  \item $g(2)=5$, and the only $2$-optimal configuration is $R_5$.  \item $g(3)=7$, and the only $3$-optimal configurations are $R_7$ and $R_6^+$.  \item $g(4)=9$, and there are four $4$-optimal configurations: $R_9$, two subsets of $L_{\Delta}$, and one additional.  \item $g(5)=12$, and the only $5$-optimal configuration is a subset of $L_{\Delta}$.  \item $g(6)=13$, and there are at least three $6$-optimal configurations: $R_{13}$, $R^+_{12}$,  and a subset of $L_{\Delta}$.

\end{enumerate}
 
\end{theorem}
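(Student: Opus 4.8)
The plan is to recognize each case as an instance of the \emph{maximum $k$-distance set} problem in $\R^2$: finding a configuration determining at most $k$ distances with the largest possible number of points. Accordingly, for each $k\in\{1,\dots,6\}$ I would split the argument into a \emph{lower bound}, obtained by exhibiting explicit configurations achieving the claimed value $g(k)$, and an \emph{upper bound with classification}, showing that no configuration with more points determines only $k$ distances and that the listed configurations are the only extremal ones. Case (i) is already handled in the text above, and the observation that $R_n$ determines $\lfloor n/2\rfloor$ distances supplies the benchmark lower bound $g(k)\ge 2k+1$ via $R_{2k+1}$, which I would reuse throughout.

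For the lower bounds I would simply verify each listed configuration. For the polygonal configurations $R_{2k+1}$ and $R_n^+$ the distance count follows from the rotational symmetry already exploited in the excerpt, together with a short check that when $6\mid n$ the circumradius coincides with one of the chord lengths $2r\sin(j\pi/n)$, so that the central point of $R_n^+$ is gained \emph{without} creating a new distance; this is exactly what makes $R_6^+$ and $R_{12}^+$ competitive in (iii) and (vi). For the lattice configurations in (iv)--(vi) I would write down explicit finite subsets of $L_{\Delta}$ and directly enumerate pairwise distances, using the fact that squared distances in $L_{\Delta}$ have the form $a^2+ab+b^2$, which reduces the count of distinct values to a finite arithmetic verification. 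This establishes the claimed lower bound for $g(k)$ and confirms that each listed configuration is a valid competitor.

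The substance of the theorem is the matching upper bound and the uniqueness/classification, where the difficulty concentrates. The uniform starting point is the concentric-circle structure: in any $k$-distance set each point $P$ sees all remaining points on at most $k$ circles centered at $P$, and since two distinct circles meet in at most two points, superimposing the circle systems around several well-chosen centers forces severe combinatorial rigidity. I would begin each case by selecting a diameter pair $P,Q$ realizing the maximum distance $D$; every other point then lies in the lens formed by the two radius-$D$ disks and at an intersection of a circle about $P$ with a circle about $Q$, pinning the candidate positions to a finite set. Propagating these constraints point by point, and discarding any configuration that would create a forbidden $(k+1)$st distance, yields the bound on the number of points, after which the surviving rigid possibilities can be matched against the listed configurations.

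The main obstacle is that this case analysis grows explosively with $k$ and does not collapse to a single clean extremal principle. The general linear-algebra bound $\binom{k+2}{2}$ for planar $k$-distance sets is far from tight once $k\ge 2$, so the sharp values must be extracted by problem-specific geometric arguments rather than one counting inequality; moreover the extremal configuration is not always the regular polygon, as the case $k=5$ already shows, where a $12$-point subset of $L_{\Delta}$ beats $R_{11}$. One therefore cannot merely argue that $R_{2k+1}$ is optimal but must genuinely search the configuration space, lattice subsets included. This is precisely why the cases were resolved incrementally and by different authors --- $1\le k\le 4$ by Erd\H{o}s and Fishburn, $k=5$ by Shinohara, and $k=6$ by Wei --- with the later cases relying on increasingly heavy, partly computer-assisted enumeration. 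A self-contained proof would accordingly assemble these separate analyses rather than deduce all six cases from a single uniform argument.
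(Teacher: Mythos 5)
This theorem is not proved in the paper at all: it is explicitly presented as a synopsis of results established elsewhere, with the cases $1\le k\le 4$ attributed to Erd\H{o}s and Fishburn \cite{EF}, $k=5$ to Shinohara \cite{Shin}, and $k=6$ to Wei \cite{Wei}. So there is no internal argument to compare yours against; the ``proof'' in the paper is a set of citations.

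Measured against that, your proposal is a reasonable and accurate description of how those cited proofs actually go --- lower bounds by exhibiting $R_{2k+1}$, $R_n^+$ for $6\mid n$, and explicit subsets of $L_{\Delta}$; upper bounds and classification via the concentric-circle rigidity around a diameter pair, with the candidate positions pinned to intersections of circles about the two endpoints --- and you correctly identify that the naive $\binom{k+2}{2}$ bound is useless here and that the extremal sets are not always regular polygons. But it is a roadmap, not a proof: every substantive step (the upper bound $g(k)\le 2k+1$ or $\le 12, 13$ for each $k$, and the uniqueness of the listed configurations) is deferred to a case analysis you do not carry out, and you say so yourself in the final sentence. That is a genuine gap if the goal were a self-contained proof, but it is exactly the same gap the paper leaves open by citing \cite{EF}, \cite{Shin}, and \cite{Wei} rather than reproducing their arguments. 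In short: your outline matches the strategy of the literature the paper relies on, and the honest conclusion --- that one must assemble the three separate, increasingly computer-assisted analyses rather than find a single uniform argument --- is precisely the state of affairs the paper reflects by stating this result without proof.
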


We see that $k=5$ is the first case in which $R_{2k+1}$ is not a $k$-optimal configuration, which is quickly followed by the case $k=6$ in which $R_{13}$ and $R_{12}^+$ are $6$-optimal. However, the aforementioned fact that $n$ points can be arranged within a square subset of $L_{\Box}$ in order to determine $O(n/\sqrt{\log n})$ distinct distances ensures that $g(k)=\Omega(k\sqrt{\log k})$. Therefore, there exists $k_0\in \N$ such that neither $R_{2k}^+$ nor $R_{2k+1}$ is $k$-optimal for all $k\geq k_0$ (note that $R_{2k}$ is never $k$-optimal because $R_{2k+1}$ has more points with the same number of distinct distances). Following some illuminating constructions, Erd\H{o}s and Fishburn \cite{EF} indicate in a remark that one can take $k_0=7$, but the remaining details are left unverified. We also observe that at least one subset of $L_{\Delta}$ is $k$-optimal for $3\leq k \leq 6$. The appeal of $L_{\Delta}$ for the purposes of minimizing distinct distances is intuitive, based on the fact that the lattice forms equilateral triangles. The following conjecture of Erd\H{o}s and Fishburn makes precise the belief that $L_{\Delta}$ is the correct place to look for EF-optimal configurations.

\begin{conjecture}[Conjecture 1, \cite{EF}] \label{efcon} There exists at least one $k$-optimal configuration in $L_{\Delta}$ for all $k\geq 3$, and all $k$-optimal configurations are represented by subsets of $L_{\Delta}$ for all $k\geq 7$. 

\end{conjecture}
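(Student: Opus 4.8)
The plan is to attack the two clauses separately, exploiting the algebraic structure of $L_{\Delta}$. Identifying $L_{\Delta}$ with the Eisenstein integers $\Z[\omega]$, where $\omega = e^{\pi\i/3}$ is a primitive sixth root of unity, the lattice point $(a+\tfrac12 b, \tfrac{\sqrt3}{2}b)$ becomes $a+b\omega$, and the squared distance between two lattice points is a value of the norm form $a^2+ab+b^2$ (a \emph{L\"oschian number}). Since $S-S\subseteq L_{\Delta}$ whenever $S\subseteq L_{\Delta}$, counting the distinct distances of a finite $S\subseteq L_{\Delta}$ reduces to counting the distinct nonzero values taken by this form on the difference set $S-S$. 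The multiplicative structure of $\Z[\omega]$, in particular the splitting behavior of rational primes modulo $3$, then governs how rapidly new distance values accumulate as $S$ grows, and this is the tool I would use throughout.

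For the existence clause (a $k$-optimal configuration in $L_{\Delta}$ for every $k\geq 3$), the plan is to exhibit an explicit increasing family of subsets of $L_{\Delta}$, beginning with the hexagonal patches that already realize optimal configurations for $3\leq k\leq 6$, such as $R_6^+$, and to optimize the \emph{shape} of each patch so that its difference set hits as few distinct L\"oschian numbers as possible relative to its cardinality. First I would compute, for candidate hexagonal and near-hexagonal patches of increasing radius, the exact number of distinct distances as a function of the number of points, and match this against the conjectured value of $g(k)$.

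The crux, and the first major obstacle, is \emph{optimality}: proving that no configuration anywhere in $\R^2$ beats these lattice constructions, i.e.\ establishing a matching upper bound on $g(k)$. This is precisely the sharp inverse form of the distinct-distances problem, and the Guth--Katz estimate $f(n)=\Omega(n/\log n)$ is nowhere near sharp enough to pin down $g(k)$ exactly for large $k$; no such exact bound is currently known, which is exactly why the statement remains a conjecture rather than a theorem.

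The second clause, that \emph{every} $k$-optimal configuration lies in $L_{\Delta}$ for $k\geq 7$, demands a rigidity theorem, and I expect this to be the hardest part. The strategy would be local-to-global: show that extremality forces the configuration to contain many equilateral triangles (equality in the governing counting inequalities ought to force triangular incidences), and then bootstrap this local triangular structure into global membership in a single similar copy of $L_{\Delta}$. The essential difficulty is ruling out all competitors simultaneously, namely the regular-polygon families $R_n$ and $R_n^+$ (which the preceding discussion indicates cease to be optimal only for $k\geq 7$) together with possible sporadic non-lattice configurations, and this in turn requires both the sharp upper bound above and a full classification of its extremal cases, neither of which is presently available.
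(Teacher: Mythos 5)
This statement is Conjecture 1 of Erd\H{o}s and Fishburn, and the paper does not prove it --- nor does anyone: it is an open problem, and the paper's contribution is only supporting evidence (the verified cases $3\leq k\leq 6$ from Theorem 1.3, the corrected data tables, and the density/quadratic-form heuristics of Section 4). So there is no proof in the paper against which to compare yours, and your proposal, by your own admission in its third and fourth paragraphs, is a research program rather than a proof. The framing via Eisenstein integers and the norm form $a^2+ab+b^2$ is sound and is essentially the same number-theoretic lens the paper uses in Section 4 (and which underlies the Moree--Osburn work the paper cites), and your identification of the two missing ingredients is accurate: (1) a matching \emph{upper} bound on $g(k)$, which would require an exact inverse theorem far beyond the Guth--Katz lower bound $f(n)=\Omega(n/\log n)$; and (2) a rigidity/classification theorem forcing every extremal configuration into a single similar copy of $L_{\Delta}$ for $k\geq 7$. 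Neither exists, so the proposal cannot be completed into a proof.

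One concrete caution about the existence clause: exhibiting good hexagonal or near-hexagonal patches only yields \emph{lower} bounds on $g(k)$ (as in Lemma 3.2 and Corollary 3.3 of the paper), and even granting a hypothetical exact value of $g(k)$, you would still need to show that some subset of $L_{\Delta}$ \emph{attains} it for every $k\geq 3$ --- note that already for $k=1$ and $k=2$ the optimal configurations ($R_3$ is in $L_\Delta$, but $R_5$ is not) show the lattice does not automatically win, and for $k=4$ and $k=6$ the lattice merely ties with non-lattice configurations such as $R_9$ and $R_{13}$. Your local-to-global rigidity strategy for the second clause would also have to contend with the fact that equality in the known counting inequalities is nowhere near restrictive enough to force equilateral-triangle incidences; no such ``inverse'' structural statement is currently within reach. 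In short: the proposal correctly maps the terrain but, as you acknowledge, proves nothing, which is consistent with the statement's status as a conjecture.
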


A mechanism by which Erd\H{o}s and Fishburn gather further information and provide evidence for Conjecture \ref{efcon} is the presentation of data on the number of distances determined by particular subsets of $L_{\Delta}$ and $L_{\Box}$. Specifically, they focus on hexagonal arrays in $L_{\Delta}$ and square arrays in $L_{\Box}$, particularly on cases when these arrays have approximately the same number of points. They observe that, in these cases, the hexagonal arrays of $L_{\Delta}$ have about $26\%$ fewer distances than their square counterparts.

\section{Results and Outline}


In Section \ref{rig}, by filling in gaps and constructing new examples, we rigorously verify the following result mentioned in the introduction ($k_0=7$), which is included in a remark without proof in \cite{EF}.
\begin{theorem} \label{EFver} $g(k)=2k+1$ for $k\in \{1,2,3,4,6\}$, while $g(k)>2k+1$ otherwise.  

\end{theorem}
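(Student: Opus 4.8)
The plan is to separate the two assertions. The equalities $g(k)=2k+1$ for $k\in\{1,2,3,4,6\}$ and the strict inequality for $k=5$ are immediate from the completely resolved cases recorded above: one reads off $g(1)=3$, $g(2)=5$, $g(3)=7$, $g(4)=9$, $g(6)=13$, each equal to $2k+1$, together with $g(5)=12>11$. Since the regular $(2k+1)$-gon $R_{2k+1}$ always determines exactly $k$ distances, we have $g(k)\ge 2k+1$ for every $k$, so the entire remaining content of the theorem is the strict inequality $g(k)>2k+1$ for all $k\ge 7$. Equivalently, for each such $k$ I must exhibit a configuration with at least $2k+2$ points determining at most $k$ distinct distances, and the natural search space is the triangular lattice $L_\Delta$.

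The backbone of the construction will be the centered hexagonal arrays $H_m\subseteq L_\Delta$, consisting of the center together with its first $m$ concentric rings, so that $H_m$ has $P(m)=1+3m(m+1)$ points and diameter-squared $4m^2$ in nearest-neighbor units. Because the squared distance between lattice points equals $a^2+ab+b^2$, the distinct-distance count $D(m)$ is exactly the number of Loeschian values $a^2+ab+b^2$ that are realized by a pair inside $H_m$. For small $m$ I will compute $D(m)$ directly; for instance $H_2$ has $19$ points and realizes precisely the Loeschian numbers $1,3,4,7,9,12,13,16$, so $D(2)=8$ and $19>2\cdot 8+1$, already settling $k=8$. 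The case $k=7$ is then handled by a subconfiguration: deleting three alternate outer corners of $H_2$ removes the unique maximal distance (squared $16$) while leaving $16>15$ points and at most $7$ distances. For the tail I will show $P(m)\ge 2D(m)+2$ for all $m\ge 2$: trivially $D(m)$ is at most the number of Loeschian integers in $[1,4m^2]$, which by the Landau-type density of such integers is $O\!\left(m^2/\sqrt{\log m}\right)=o(P(m))$, so the inequality holds for all large $m$ and the finitely many remaining $m$ are checked by hand.

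To pass from these anchors to every integer $k\ge 7$, I will sweep downward from each $H_m$ by repeatedly deleting the points that carry the current largest distance --- outer corners and then outer shell points --- so that each deletion step lowers the realized distance count by one. The claim to verify is that this sweep takes the distance count from $D(m)$ down to at most $D(m-1)$, so that the bands anchored at $H_2,H_3,H_4,\dots$ overlap and jointly cover all $k\ge 7$, and that throughout the sweep the surviving point count stays strictly above the moving threshold $2k+1$. The latter is plausible because the slack $P(m)-(2D(m)+1)$ grows like $m^2$, while each unit reduction in $k$ lowers the threshold by $2$ and, near the boundary of the patch, costs only a bounded number of points.

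The main obstacle is precisely this bookkeeping of distinct distances. Two points require genuine care: first, the exact evaluation of $D(m)$ and of the counts for intermediate subconfigurations is a delicate question of which Loeschian values are actually realized inside a bounded region, as opposed to merely being $\le 4m^2$, and it is easy to introduce off-by-one or realizability errors; second, the deletion sweep must be shown to reduce the distance count in controlled unit steps without ever dropping the point count below $2k+2$, and the overlaps between consecutive anchors must leave no value of $k$ uncovered. Converting the asymptotic efficiency of $L_\Delta$ into a fully effective statement valid from $k=7$ onward --- rather than merely for all sufficiently large $k$ --- is the crux, and is exactly the gap left open by the unproved remark in \cite{EF}.
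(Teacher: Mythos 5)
Your overall architecture --- reduce to showing $g(k)>2k+1$ for $k\ge 7$, anchor at hexagonal arrays in $L_\Delta$, handle large $k$ by a density count and intermediate $k$ by interpolation --- is the same skeleton as the paper's, and your small computations ($D=8$ for the $19$-point hexagon, the $16$-point truncation for $k=7$) are correct and match configurations the paper cites. But the proposal has two genuine gaps, both at exactly the places you flag as ``to verify,'' and neither is a routine detail. First, your large-$k$ argument rests on the bound $D(m)=O(m^2/\sqrt{\log m})$ from the Landau-type density of Loeschian numbers. That asymptotic is ineffective as you invoke it (and the paper notes its convergence is quite slow), so ``the finitely many remaining $m$ are checked by hand'' is not an executable step: you cannot enumerate the exceptions without an explicit threshold. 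The paper avoids analytic number theory entirely here: it proves elementarily (Lemmas \ref{trans} and \ref{triup}) that every distance in $H_s$ occurs from a vertex, whence $H_s$ determines at most $s^2-1$ distances, and a two-line algebraic manipulation (Corollary \ref{gt63}) gives the explicit threshold $g(k)>2k+1$ for all $k\ge 63$. Moreover, even granting your $P(m)\ge 2D(m)+2$, that only covers $k$ in the interval $[D(m),(P(m)-3)/2]$ for each $m$, so the burden shifts entirely to the interpolation step.

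Second, and more seriously, the deletion sweep that is supposed to cover all intermediate $k$ is asserted, not proved. You give no argument that deleting the points carrying the current largest distance reduces the distance count in controlled steps while keeping the surviving point count above $2k+1$, and this is precisely where the difficulty concentrates: the hexagonal anchors alone leave genuine gaps (in the paper's accounting, after using $H_3,\dots,H_8$ and five configurations from \cite{EF}, the values $k\in\{18,\dots,22,30,\dots,33,45\}$ remain uncovered), and the paper closes them not by a general sweep but by exhibiting four explicit configurations with pairs $(18,43)$, $(21,55)$, $(29,70)$, $(40,102)$ whose distance counts were verified by hand and by computer. A statement of the form ``$g(k)>2k+1$ for every $k\ge 7$'' cannot be established by a covering scheme whose feasibility on a specific finite list of $k$ is left unchecked; as written, your proposal proves the theorem for no concrete range of $k$ beyond the handful of values you compute directly. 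The plan is salvageable --- it is essentially the paper's plan --- but the finite verification is the proof, not an afterthought.
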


\noindent Since $R_n$ determines $\lfloor n/2 \rfloor$ distinct distances and $R_n^+$ determines $n/2$ distinct distances if $n$ is divisible by $6$ and $\lfloor n/2 \rfloor+1$ distinct distances otherwise, Theorem \ref{EFver} resolves the question of when the vertices of a regular polygon, with or without an additional point in the center, is an EF-optimal configuration. 

\begin{corollary}$R_n$ is an EF-optimal configuration if and only if $n\in \{3, 5, 7, 9, 13\}$, and $R_n^+$ is an EF-optimal configuration if and only if $n\in \{6,12\}$.

\end{corollary}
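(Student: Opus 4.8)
The plan is to reduce the statement to a single monotonicity observation and then feed in Theorem \ref{EFver}. First I would record two facts about $g$ that are already implicit in the introduction: that $g$ is non-decreasing (any configuration determining at most $k$ distances also determines at most $k+1$, so $g(k)\leq g(k+1)$), and that $g(k)\geq 2k+1$ for every $k$ (witnessed by $R_{2k+1}$). From these I would extract the \emph{key lemma}: if a configuration $C$ consists of $N$ points and determines exactly $d$ distances, then $C$ is EF-optimal if and only if $g(d)=N$. Indeed, $C$ is $d$-optimal precisely when $g(d)=N$; and if $C$ were $k$-optimal for some other $k$, then $d\leq k$ and $N=g(k)\geq g(d)\geq N$, forcing $g(d)=N$. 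This collapses the question ``for which $n$ is $R_n$ (resp.\ $R_n^+$) EF-optimal?'' into ``for which $n$ does $g$ take the value (number of points) at the argument (number of distances)?''

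Next I would apply the lemma to $R_n$, which has $N=n$ points and, as noted in the text, determines $d=\lfloor n/2\rfloor$ distances. If $n=2m+1$ is odd, the condition $g(d)=N$ becomes $g(m)=2m+1$, which by Theorem \ref{EFver} holds exactly for $m\in\{1,2,3,4,6\}$, i.e.\ $n\in\{3,5,7,9,13\}$. If $n=2m$ is even, the condition becomes $g(m)=2m$, which is impossible since $g(m)\geq 2m+1$. This settles the $R_n$ half.

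Finally I would run the same lemma for $R_n^+$, which has $N=n+1$ points, splitting into cases according to the distance count recorded just before the corollary. When $6\mid n$ the configuration determines $n/2$ distances, so writing $m=n/2$ the condition $g(n/2)=n+1$ reads $g(m)=2m+1$; Theorem \ref{EFver} forces $m\in\{1,2,3,4,6\}$, and intersecting with the multiples of $3$ required by $6\mid 2m$ leaves $m\in\{3,6\}$, i.e.\ $n\in\{6,12\}$. When $6\nmid n$ the configuration determines $\lfloor n/2\rfloor+1$ distances; setting $j=\lfloor n/2\rfloor+1$, the target $g(j)=n+1$ simplifies to $g(j)=2j$ (if $n$ is odd) or $g(j)=2j-1$ (if $n$ is even), both of which violate $g(j)\geq 2j+1$. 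Hence $R_n^+$ is EF-optimal exactly for $n\in\{6,12\}$. I do not expect a genuine obstacle here: the content is entirely in Theorem \ref{EFver}, and the only place to be careful is the bookkeeping of the even/odd and divisibility-by-$6$ cases in the $R_n^+$ analysis, where an off-by-one in the distance count would propagate to the wrong answer.
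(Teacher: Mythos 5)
Your proposal is correct and follows essentially the route the paper intends: the paper states this corollary as an immediate consequence of Theorem \ref{EFver} together with the distance counts for $R_n$ and $R_n^+$, leaving the details to the reader, and your key lemma (a configuration with $N$ points determining exactly $d$ distances is EF-optimal iff $g(d)=N$, via monotonicity of $g$ and the bound $g(k)\geq 2k+1$) is precisely the bookkeeping that makes that one-sentence justification rigorous. The case analysis for $R_n^+$ is handled correctly, including the parity and divisibility-by-$6$ splits.
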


In an effort to verify and expand upon the aforementioned data on lattice configurations provided in \cite{EF}, we made the surprising discovery that the data tables contain numerous, albeit relatively small, errors in the number of distinct distances determined by said configurations. After repeatedly and rigorously checking our code, we carried out some calculations by hand, the most readily doable of which concerned a $5\times 5$ square configuration in $L_{\Box}$, in other words $\{0,1,2,3,4\}^2$. The distances determined by this configuration are $\sqrt{n}$ for $n=1,2,4,5,8,9,10,13,16,17,18,20,25,32$, for a total of $14$ distinct distances, while the data table in \cite{EF} indicates only $13$ distinct distances.

 Further, we sought additional insight as to the signifcance of the $26\%$ figure observed by Erd\H{o}s and Fishburn when comparing the number of distances in comparably sized configurations in $L_{\Delta}$ and $L_{\Box}.$ In particular, we considered density and number theoretic properties, and investigated whether the hexagonal and square arrays were the best choices to compare the two lattices.  

 The following provides an outline of Section \ref{data}: 
\begin{itemize} \item[(i)] We provide corrected and expanded data concerning the number of points and distances determined by hexagonal arrays in $L_{\Delta}$ and square arrays in $L_{\Box}$. \\

\item[(ii)] We provide a heuristic explanation, through density and number theoretic considerations, for why an optimal configuration in $L_{\Delta}$ should be about $27.6\%$ better than an optimal configuration in $L_{\Box}$ for the purposes of minimizing distinct distances. This is close to the $26\%$ observed in \cite{EF}, although that observation was influenced by the errors in the data. We see that our heuristic is in fact rigorous in the case of intersections of the respective lattices with large disks centered at the origin. \\

\item[(iii)]  We provide new data indicating that, even amongst subsets of $L_{\Delta}$ and $L_{\Box}$, respectively, the hexagonal and square arrays are not optimal with regard to minimizing the number of distinct distances for a fixed number of points. We see that these configurations are routinely ``beaten" by the aforementioned lattice disks. We provide a large amount of numerical data for all four types of configurations, focusing on instances where all four types contain approximately the same number of points. \\

\item[(iv)] Using considerations from items (ii) and (iii), as well as Conjecture \ref{efcon}, we make a detailed conjecture related to the original Erd\H{o}s distinct distance problem. 

\end{itemize}

\section{Proof of Theorem \ref{EFver}} \label{rig}

For each integer $s\geq 2$, let $H_s$ denote the hexagonal array in $L_{\Delta}$ with $s$ points on each side. For clarity, we note that Figure \ref{hexproof} below depicts $H_4$. For the following two lemmas, we take the convention that the leftmost vertex of $H_s$ lies at the origin. 

\begin{lemma}\label{trans} Every distance that occurs in $H_s$ occurs between the origin and a point of $H_s$ in the closed upper half-plane $(y\geq 0)$. 
\end{lemma}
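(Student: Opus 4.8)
The plan is to work in lattice coordinates and reduce the statement to a fact about a single binary quadratic form. Writing each point of $L_\Delta$ as $a e_1 + b e_2$ with $e_1=(1,0)$ and $e_2=(\tfrac12,\tfrac{\sqrt3}{2})$, one has $\norm{a e_1+b e_2}^2 = a^2+ab+b^2 =: Q(a,b)$, so the distances occurring in any finite subset of $L_\Delta$ are exactly the square roots of the values of $Q$ on its difference vectors. With the leftmost vertex at the origin, I would first record the explicit description
\[
H_s=\{(a,b)\in\Z^2:\ 0\le a\le 2(s-1),\ \abs{b}\le s-1,\ 0\le a+b\le 2(s-1)\},
\]
together with its center $(s-1)e_1$, which is a lattice point, and its upper half $H_s^{+}=\{(a,b)\in H_s:\ b\ge 0\}=\{(a,b):\ a\ge0,\ 0\le b\le s-1,\ a+b\le 2(s-1)\}$. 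Because $Q(-a,-b)=Q(a,b)$, the set of distances determined by $H_s$ is the set of $\sqrt{Q(v)}$ with $v$ ranging over the difference set $D:=H_s-H_s$, and the lemma becomes the assertion that every value of $Q$ on $D$ is already attained on $H_s^{+}$.

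The next step is to exploit the rotational symmetry. Rotation by $60^\circ$ about the (lattice) center is a symmetry of $H_s$ preserving $L_\Delta$, so its linear part $\rho$ preserves $D$ and satisfies $Q\circ\rho=Q$; since the six $\rho$-rotates of the closed direction-sector $[0^\circ,60^\circ]$ tile the plane, and since that sector is exactly the cone $\{a\ge0,\ b\ge0\}$ in lattice coordinates, every nonzero $v\in D$ can be moved, with its $Q$-value unchanged, into $\{(a,b)\in D:\ a\ge0,\ b\ge0\}$. Writing such a reduced vector as a difference $P'-Q'$ of points of $H_s$ and using $0\le a'+b'\le 2(s-1)$ for the coordinates of $P'$ and $Q'$ gives the bound $a+b\le 2(s-1)$. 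Crucially this cone lies in the closed upper half-plane, so it now suffices to match each $(a,b)$ with $a,b\ge0$ and $a+b\le 2(s-1)$ to a point of $H_s^{+}$ carrying the same $Q$-value.

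The crux is the final matching, and it is here that the only genuinely nonobvious idea appears. If $0\le b\le s-1$ then $(a,b)$ already satisfies the defining inequalities of $H_s^{+}$, and we are finished. The obstacle is the range $b\ge s$: such a vector, placed at the origin, falls strictly above the top edge of $H_s$ (the prototype is the diameter of length $2(s-1)$, realized in direction $60^\circ$ by the lattice vector $(0,2(s-1))$), so one cannot simply translate the difference to the origin and expect to land in the array. I would resolve this with the form automorphism $(a,b)\mapsto(b,a)$, which fixes $Q$: since $a+b\le 2(s-1)$ and $b\ge s$ force $a\le s-2$, the swapped point $(b,a)$ satisfies $b\ge0$, $0\le a\le s-1$, and $b+a\le 2(s-1)$, hence lies in $H_s^{+}$ and realizes the same distance. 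This swap is a reflection across the axis of the fundamental sector; it replaces the failed ``translate to the origin'' heuristic, and once it is in place the remainder is simply bookkeeping with the three pairs of linear inequalities that cut out $H_s$.
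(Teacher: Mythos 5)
Your proof is correct, but it takes a genuinely different route from the paper's. The paper argues directly on pairs of points $P,Q\in H_s$ with a three-case geometric reduction: rotate by a multiple of $60^\circ$ about the center to move a vertex $P$ to the origin and reflect across the $x$-axis if needed; if $P$ is a non-vertex boundary point, slide the pair along $P$'s edge to a vertex (the delicate step being that $Q$ stays inside, which is checked by comparing the lengths of parallel edges); and if neither point is on the boundary, translate both leftward until one is. You instead pass to the difference set $D=H_s-H_s$ in lattice coordinates, use the linear part of the $60^\circ$ rotation to push any difference vector into the fundamental cone $\{a\ge 0,\ b\ge 0\}$ with the bound $a+b\le 2(s-1)$, and then dispose of the one obstruction (vectors with $b\ge s$, too steep to sit inside the hexagon when based at the origin) via the form symmetry $Q(a,b)=Q(b,a)$ --- geometrically the reflection across the $30^\circ$ bisector, playing the role of the paper's edge-sliding step. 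Your inequality description of $H_s$ and of its upper half is correct (I checked it against the vertex set $(0,0),(2(s-1),0),(s-1,\pm(s-1)),(0,s-1),(2(s-1),-(s-1))$), and the final case analysis is airtight: if $b\le s-1$ the reduced vector already lies in $H_s^+$, and if $b\ge s$ then $a\le s-2$ makes the swapped point admissible. What your approach buys is the replacement of the paper's somewhat informal translation argument by explicit linear-inequality bookkeeping, at the cost of first having to justify the coordinate description of $H_s$; the paper's version is more elementary and picture-driven but leaves more to the reader in Case 2. Both proofs ultimately exploit the same dihedral symmetry of order $12$, just organized around difference vectors versus point pairs.
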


\begin{proof} Fix $P,Q\in H_s$. 

\noindent \textbf{Case 1:} Suppose $P$ is one of the six vertices of $H_s$. If $P$ is not the origin, $H_s$ can be rotated by an integer multiple of $60^{\circ}$ to take $P$ to the origin. If the image of $Q$ under this rotation lies in the lower half-plane,  we can reflect $H_s$ about the $x$-axis to take $Q$ to the upper half-plane. We note that, due to its symmetry, $H_s$ is invariant under this transformation, call it $\phi$. Further, since $\phi$ is distance-preserving, we have $\norm{\phi(Q)}=\norm{P-Q}$. 

\noindent \textbf{Case 2:} Suppose $P$ lies on the boundary of $H_s$ but is not one of the vertices. Since there are $s$ points on each side of the boundary, $P$ is at most distance $\lfloor \frac{s-1}{2}\rfloor$ away from the nearest vertex. We call this distance $d_1$, and we call the distance from $P$ to the opposite vertex on the same side $e_1$, hence $d_1+e_1=s-1$. As for $Q$, it lies on some edge of points parallel to the side of the boundary containing $P$. Along this parallel edge, $Q$ has two distances to the boundary of $H_s$, one in each direction, so we let $d_2$ denote the distance in the same direction as $d_1$, and we let $e_2$ denote the distance in the same direction as $e_1$. We note that the parallel edge containing $Q$ is at least as long as the side-length on the boundary, so $d_2+e_2\geq s-1=d_1+e_1$. Therefore, either $d_2\geq d_1$ or $e_2\geq e_1$. 

\noindent If $d_2 \geq d_1$, we can translate $P$ by $d_1$ units to a vertex, with $Q$ remaining inside $H_s$. Similarly, if $e_2\geq e_1$, we can translate $P$ by $e_1$ units to the other vertex, with $Q$ remaining inside $H_s$. In either case, we have reduced to Case 1. The translation procedure is illustrated in Figure \ref{hexproof} below.

\begin{figure} [H] 
		\includegraphics[scale=0.5]{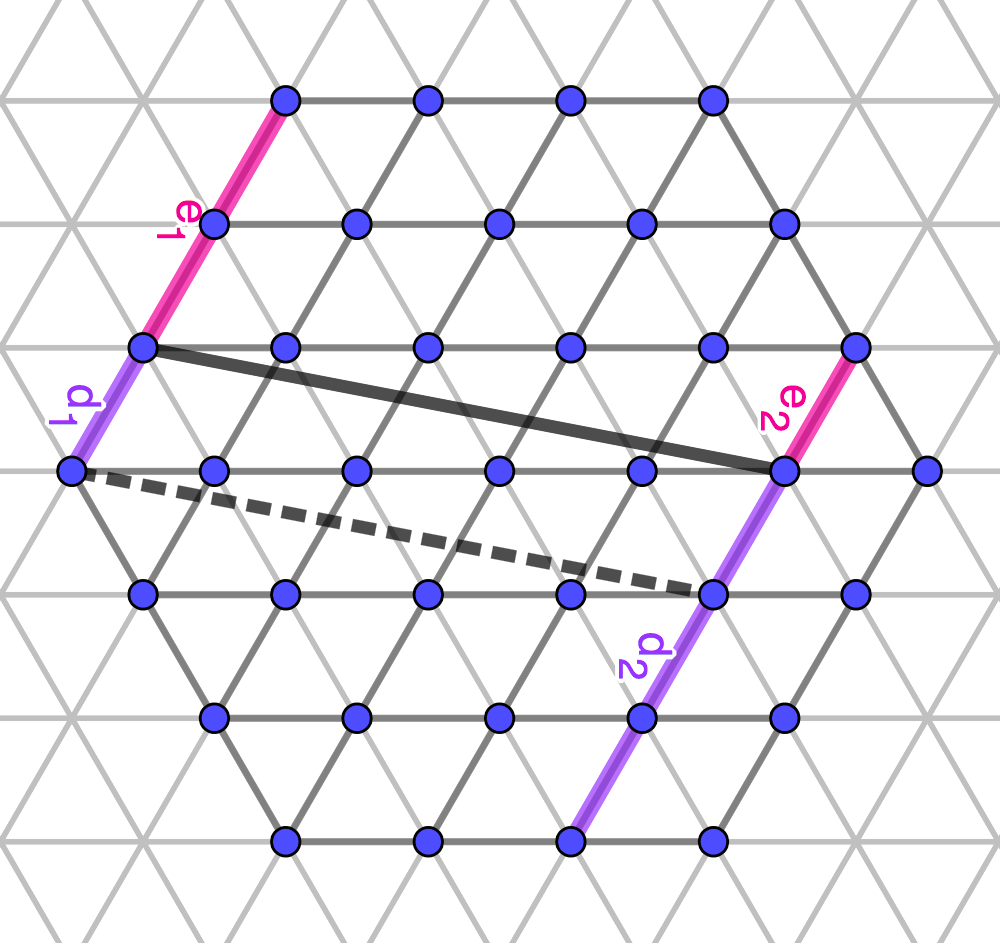} 
	 	\caption{The translation procedure described in Case 2 of the proof of Lemma \ref{trans}.}
		\label {hexproof} 		
\end{figure}

\noindent \textbf{Case 3:} If neither $P$ nor $Q$ lie on the boundary of $H_s$, then $P$ and $Q$ can both be translated left a unit at a time, preserving their distance, until one reaches the boundary, thus reducing to previous cases.
\end{proof}

\noindent As with the conclusion of Theorem \ref{EFver}, the following facts about $H_s$ were included in remarks in \cite{EF}, and we rigorously verify them here. 

\begin{lemma}\label{triup} $H_s$ contains $3s^2-3s+1$ points and determines at most $s^2-1$ distances.

\end{lemma}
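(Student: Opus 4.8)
The plan is to prove the two claims of Lemma~\ref{triup} separately, starting with the point count $3s^2-3s+1$.

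For the point count, I would view $H_s$ as a union of horizontal rows of lattice points. A hexagonal array with $s$ points per side, centered appropriately, consists of a central row of $2s-1$ points, with rows above and below that shrink by one point at each step until reaching the top and bottom vertices, each of which is a row of $s$ points. Summing the row lengths gives $(2s-1) + 2\sum_{j=s}^{2s-2} j$, and this telescopes to $3s^2-3s+1$; alternatively, one recognizes $3s^2-3s+1 = s^3-(s-1)^3$ as the standard formula for centered hexagonal numbers, which can be verified directly by induction on $s$ (the $s$-th hexagonal ring adds $6(s-1)$ points to $H_{s-1}$).

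**Counting distances:**

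The distance bound is the substantive part and where the main obstacle lies. By Lemma~\ref{trans}, every distance determined by $H_s$ is realized as $\norm{P}$ for some point $P$ of $H_s$ lying in the closed upper half-plane, with the origin placed at the leftmost vertex. So it suffices to bound the number of distinct values of $\norm{P}$ as $P$ ranges over these points. I would set up coordinates so that points of $L_\Delta$ in the upper half-plane near the origin take the form $P=(a+\tfrac12 b,\ \tfrac{\sqrt3}{2}b)$ with $b\geq 0$, giving squared distance $\norm{P}^2 = a^2+ab+b^2$, the norm form of the Eisenstein integers. The task reduces to counting how many distinct values this quadratic form takes over the relevant index set.

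The hard part will be pinning down exactly which $(a,b)$ occur — that is, describing the upper-half-plane portion of $H_s$ in these coordinates — and then showing the number of distinct values of $a^2+ab+b^2$ over that set is at most $s^2-1$. I expect the cleanest route is a counting/injection argument rather than an exact enumeration: parametrize the admissible $(a,b)$, note the obvious constraints ($0\le b\le s-1$ together with the hexagonal boundary constraints that cut off $a$ from above and below depending on $b$), and bound the number of realized values of the form by the number of admissible lattice points in a fundamental sector, then subtract for coincidences forced by the symmetries $a^2+ab+b^2 = (a+b)^2-(a+b)b+b^2$ and by the $60^\circ$ rotational symmetry already exploited in Lemma~\ref{trans}. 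A convenient sanity check at each stage is $s=2$ (so $H_2=R_6^+$, which should give $s^2-1=3$ distances) and $s=3,4$, matching the known small configurations; these anchor the general count and guard against off-by-one errors in the row-length bookkeeping.
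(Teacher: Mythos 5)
Your point count is fine: the row-by-row decomposition $(2s-1)+2\sum_{j=s}^{2s-2}j=3s^2-3s+1$ is correct, and the ring decomposition you mention in passing (each hexagonal shell contributing $6(j-1)$ points) is exactly the one the paper uses. The reduction of the distance count to values of the form $a^2+ab+b^2$ via Lemma \ref{trans} is also the right setup and matches the paper.

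The gap is that you never actually carry out the distance count. You write that ``the hard part will be pinning down exactly which $(a,b)$ occur'' and that you ``expect the cleanest route is a counting/injection argument,'' but that hard part is the entire content of the bound $s^2-1$; as written, the second half of the lemma is a plan rather than a proof. The missing step is short and goes as follows: since $a^2+ab+b^2$ is symmetric in $a$ and $b$, and whenever $\langle a,b\rangle$ lies in the upper half of $H_s$ with $b\geq a\geq 0$ the pair $\langle b,a\rangle$ does too, one may restrict attention to pairs with $a\geq b$. The upper half of $H_s$ is then covered by the square range $0\leq b\leq a\leq s-1$ (contributing $s(s+1)/2-1$ nonzero pairs) together with a residual triangle on the right in which the admissible range of $b$ decreases by one as $a$ increases (contributing $s(s-1)/2$ pairs), for a total of $s^2-1$. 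Note also that no subtraction for coincidences among values of the form is needed: the lemma only asks for an upper bound, which the count of admissible pairs already supplies, so the extra identity $a^2+ab+b^2=(a+b)^2-(a+b)b+b^2$ you propose to exploit is unnecessary here (and building the argument around it would add complications without improving the bound).
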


\begin{proof}For the first part of the lemma, we see that we can decompose $H_s$ into a disjoint union of $\tilde{H}_j$, a boundary hexagon with $j$ points on each side, for $1\leq j \leq s$ (including $\tilde{H}_1$, which is a single point). By the inclusion-exclusion principle, the number of points in $\tilde{H}_j$ is $6j-6=6(j-1)$, with the exception of $|\tilde{H_1}|=1$. Therefore, $$|H_s|=1+\sum_{j=2}^s 6(j-1)=1+\sum_{j=1}^{s-1} 6j=1+6s(s-1)/2=3s^2-3s+1. $$ 

\noindent The cardinality $|H_s|$ is known as the $s$-th \textit{central hexagonal number}. For the second part of the lemma, we have from Lemma \ref{triup} that we only need to consider distances that occur from the origin to points in $H_s$ in the closed upper half-plane. Recall that points in $L_{\Delta}$ take the form $P=\langle a,b \rangle=a(1,0)+b(1/2,\sqrt{3}/2)$ for $a,b\in \Z$, in which case the distance from $P$ to the origin is $\sqrt{a^2+ab+b^2}$. We note that if the pair $\langle a,b \rangle$ occurs in $H_s$ with $b\geq a\geq 0$, then the pair $\langle b,a \rangle$ also occurs. Since the expression $a^2+ab+b^2$ is symmetric in $a$ and $b$, we can assume when counting distances from the origin to the upper half-plane in $H_s$ that $a\geq b$.
 
\noindent We see that in order to exhaust the upper half of $H_s$, we can first consider the pairs $\langle a,b \rangle$ for $0\leq a \leq s-1$ and $0\leq b \leq s-1$. What remains is a triangle of points on the right hand side, for which the allowable range of $b$ decreases as $a$ increases. Therefore, the total number of distances in $H_s$ is at most $$\sum_{j=2}^{s} j + \sum_{i=1}^{s-1} (s-i) = \sum_{j=2}^s j + \sum_{i=1}^{s-1} i = \frac{s(s+1)}{2}-1+\frac{(s-1)s}{2}=s^2-1.$$\end{proof}

\noindent In particular, for any $k\in \N$, we can let  $s=\lfloor \sqrt{k+1} \rfloor$, and Lemma \ref{triup} tells us that $H_s$ contains $3s^2-3s+1$ points and determines at most $s^2-1\leq k$ distances, hence $g(k) \geq 3s^2-3s+1$. We have therefore established the following corollary.

\begin{corollary}\label{floorcor} $g(k) \geq 3(\lfloor \sqrt{k+1} \rfloor)^2-3(\lfloor \sqrt{k+1} \rfloor)+1$ for all $k\in \N$.

\end{corollary}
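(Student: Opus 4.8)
The plan is to obtain this as an immediate consequence of Lemma \ref{triup} by choosing the size parameter $s$ as large as the distance budget allows. The guiding principle is that any explicit configuration determining at most $k$ distances serves as a lower-bound witness for $g(k)$, simply by the definition of $g$ as the maximum cardinality of such a configuration. So the whole task reduces to selecting the best available member of the family $\{H_s\}$.

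First I would use the distance count from Lemma \ref{triup}: $H_s$ determines at most $s^2-1$ distinct distances. To ensure $H_s$ is an admissible competitor for the count $g(k)$, I require $s^2-1\leq k$, which is equivalent to $s\leq \sqrt{k+1}$. The largest integer meeting this constraint is precisely $s=\lfloor \sqrt{k+1}\rfloor$, so this is the optimal choice. With this $s$ fixed, $H_s$ determines at most $s^2-1\leq k$ distances, as needed.

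Next I would invoke the point count from the same lemma: $H_s$ contains $3s^2-3s+1$ points. Combining the two facts, $H_s$ is a configuration of $3s^2-3s+1$ points determining at most $k$ distances, so by definition $g(k)\geq 3s^2-3s+1$. Substituting $s=\lfloor \sqrt{k+1}\rfloor$ then gives the stated inequality. Since the argument is just an optimized substitution into an already-established lemma, I expect no substantive difficulty.

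The one point meriting a remark is that Lemma \ref{triup} is phrased for $s\geq 2$, while for $k\in\{1,2\}$ one has $s=\lfloor \sqrt{k+1}\rfloor=1$. In those two cases the asserted bound reads $g(k)\geq 1$, which is trivially true (indeed $g(1)=3$ and $g(2)=5$), so the edge case is harmless; for every $k\geq 3$ we have $s\geq 2$ and the reasoning above applies verbatim. Thus the only ``obstacle'' is a boundary bookkeeping issue, not a genuine mathematical one.
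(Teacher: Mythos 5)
Your proposal is correct and matches the paper's argument exactly: set $s=\lfloor\sqrt{k+1}\rfloor$ so that $s^2-1\leq k$, then apply Lemma \ref{triup} to conclude $g(k)\geq 3s^2-3s+1$. Your remark on the $s=1$ edge case for $k\in\{1,2\}$ is a small bonus of care the paper does not spell out, but the substance is identical.
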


\noindent Some basic algebra now reduces the proof of Theorem \ref{EFver} to a manageable number of remaining cases.

\

\begin{corollary}\label{gt63} $g(k)>2k+1$ for all $k\geq 63$.

\end{corollary}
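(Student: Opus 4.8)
The plan is to feed the lower bound of Corollary \ref{floorcor} directly into the target inequality and collapse everything into a single quadratic inequality in the auxiliary variable $s=\lfloor \sqrt{k+1}\rfloor$. First I would record the defining inequalities of the floor: from $s\leq \sqrt{k+1}<s+1$, squaring yields $s^2\leq k+1<(s+1)^2=s^2+2s+1$, so that $s^2-1\leq k\leq s^2+2s-1$, where the upper bound uses that $k$ is an integer strictly less than $s^2+2s$. The crucial observation for the stated range is that $k\geq 63$ forces $k+1\geq 64$, hence $s=\lfloor\sqrt{k+1}\rfloor\geq 8$.

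Next I would combine Corollary \ref{floorcor}, which gives $g(k)\geq 3s^2-3s+1$, with the upper estimate $k\leq s^2+2s-1$. Since the lower bound $3s^2-3s+1$ depends only on $s$ while $2k+1$ increases with $k$, it suffices to verify the desired strict inequality at the largest admissible value of $k$ within each $s$-block, namely $k=s^2+2s-1$. A direct computation then gives
\[
(3s^2-3s+1)-(2k+1)\geq (3s^2-3s+1)-(2s^2+4s-1)=s^2-7s+2.
\]
Finally I would note that $s^2-7s+2=s(s-7)+2>0$ for all $s\geq 8$, so indeed $g(k)\geq 3s^2-3s+1>2k+1$ whenever $k\geq 63$.

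There is no genuine obstacle here beyond bookkeeping; the only points demanding care are the correct rounding in the floor estimate (securing the block bound $k\leq s^2+2s-1$ and the threshold $s\geq 8$) and the remark that the $s$-dependent lower bound need only be tested against the largest $k$ in its block. In fact the quadratic $s^2-7s+2$ is already positive at $s=7$, so the underlying estimate has slack and the cutoff $k\geq 63$ (equivalently $s\geq 8$) is comfortable; the finitely many smaller values $7\leq k\leq 62$ are then left to be dispatched by separate, more explicit arguments in completing the proof of Theorem \ref{EFver}.
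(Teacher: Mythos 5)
Your proof is correct and follows essentially the same route as the paper: both arguments feed Corollary \ref{floorcor} into the target inequality and reduce to a positivity check for a quadratic, the only difference being that the paper lower-bounds $\lfloor u\rfloor$ by $u-1$ with $u=\sqrt{k+1}$ and works with the continuous quantity, whereas you keep $s=\lfloor\sqrt{k+1}\rfloor$ integral and instead upper-bound $k$ by $s^2+2s-1$. Your discrete bookkeeping is in fact marginally sharper (it already succeeds at $s=7$), but this does not change the substance of the argument.
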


\begin{proof} Fix $k \in \N$, and let $u=\sqrt{k+1}$. Since $\lfloor u \rfloor >u-1$, we have by Corollary \ref{floorcor} that \begin{equation*} g(k)>3(u-1)^2-3(u-1)+1=3u^2-9u+7. \end{equation*} Further, we see that $3u^2-9u+7\geq 2u^2-1=2k+1$ provided $u^2-9u+8=(u-8)(u-1)\geq 0$, which holds for $u\geq 8$, or in other words $k\geq 63$. \end{proof}

\begin{proof}[Proof of Theorem \ref{EFver}] Based on numerical data provided in \cite{EF} and re-verified, $H_s$ for $s\in \{3,4,5,6,7,8\}$ yield $(k,n)$ pairs, where $k$ is the number of distinct distances and $n$ is the number of points, of $$(8,19), (15,37), (23,61), (34,91) ,(46, 127), (59, 169).$$ Further, the same paper displays arrays yielding pairs $(7,16),(9,21),(10,25),(11,27),(13,31).$ In particular, we know that $g(k)>2k+1$ for $7\leq k \leq 17$, $23\leq k \leq 29$, $34\leq k \leq 44$, and $46\leq k \leq 83$. Further, we have by Corollary \ref{gt63} that $g(k)>2k+1$ for all $k\geq 63$. This leaves only the following list of exceptions: $$k\in \{18,19,20,21,22,30,31,32,33,45\}.$$   

\noindent The following four configurations, the number of distinct distances in which were verified both by hand and by computer, account for the remaining exceptions, and the theorem follows. \end{proof}

\begin{figure}[H]
\centering
\begin{subfigure}{.5\textwidth} 
  \centering
  \includegraphics[width=.5\linewidth]{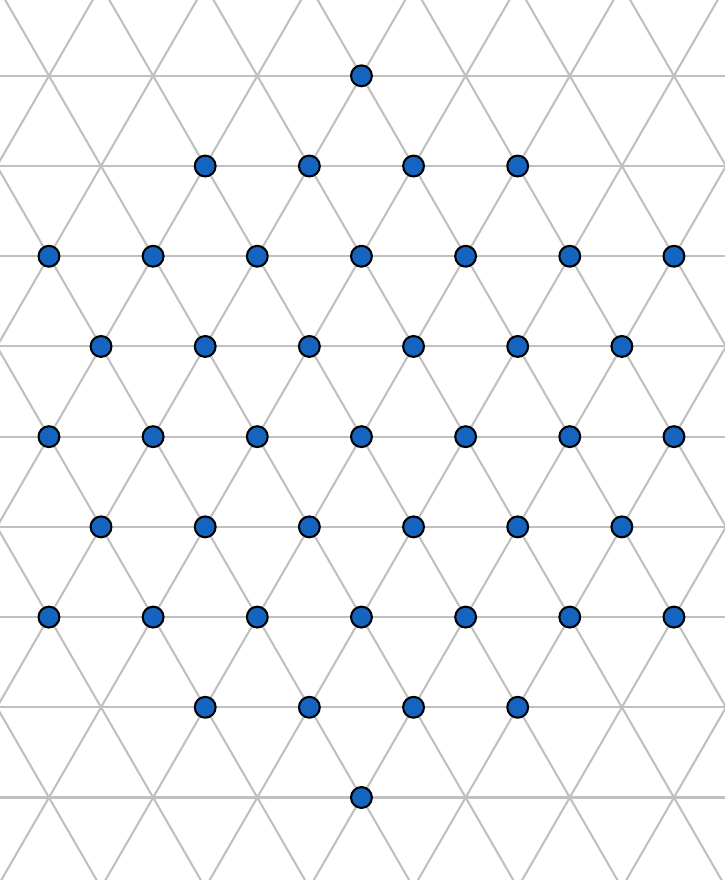}
  \caption{$k=18$, $n=43$}
  \label{18}
\end{subfigure}%
\begin{subfigure}{.5\textwidth}
  \centering
  \includegraphics[width=.5\linewidth]{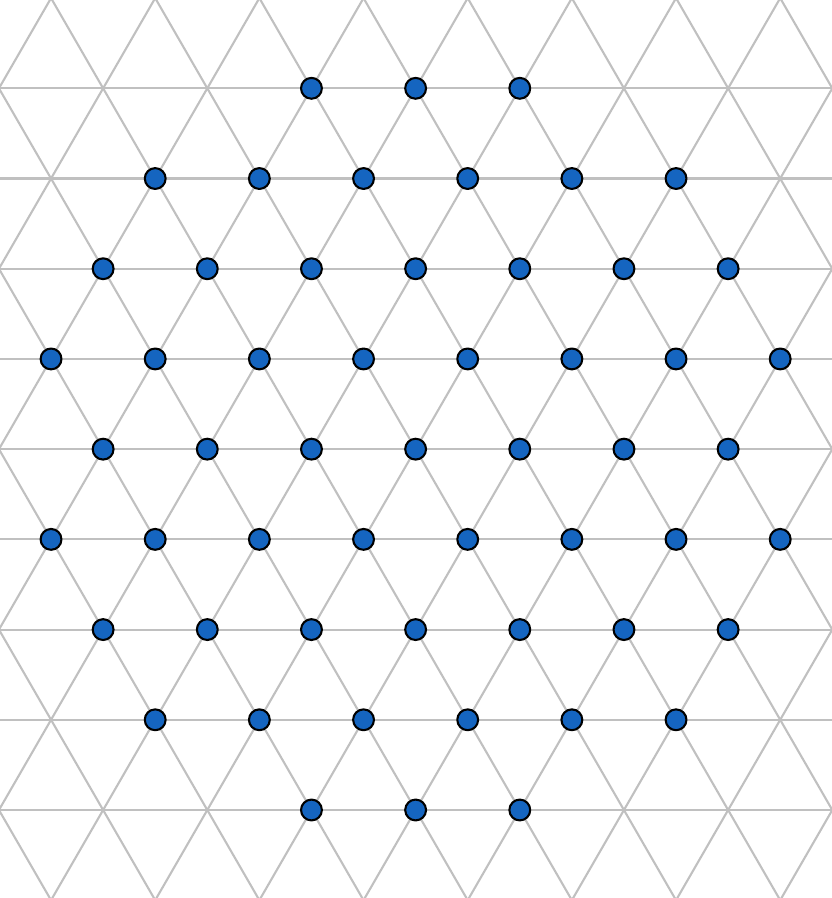}
  \caption{$k=21$, $n=55$}
  \label{22}
\end{subfigure}
\caption{These two configurations show that $g(k)>2k+1$ for $k\in \{18,19,20,21,22\}$.}\label{fig:test1}  
\end{figure}

\
 
  

\begin{figure}[H]
\centering
\begin{subfigure}{.5\textwidth}
  \centering
  \includegraphics[width=.6\linewidth]{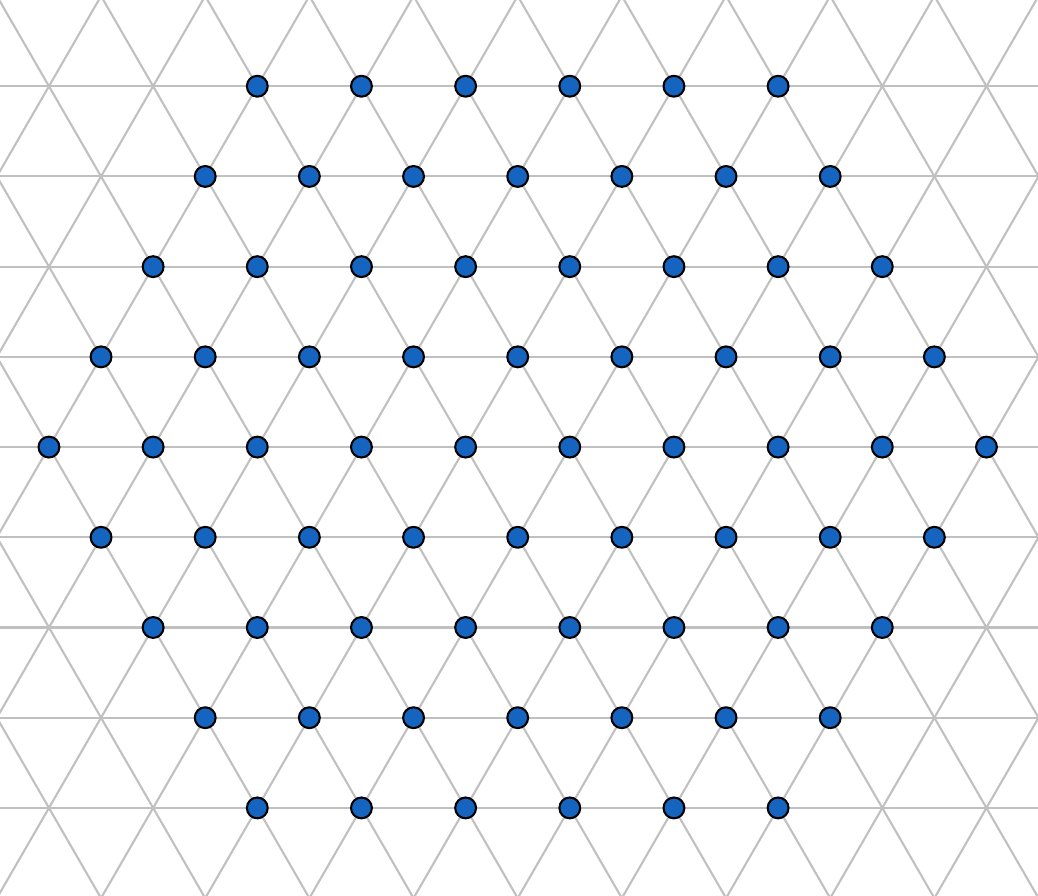}
  \caption{$k=29$, $n=70$}
  \label{29}
\end{subfigure}%
\begin{subfigure}{.5\textwidth}
  \centering
  \includegraphics[width=.6\linewidth]{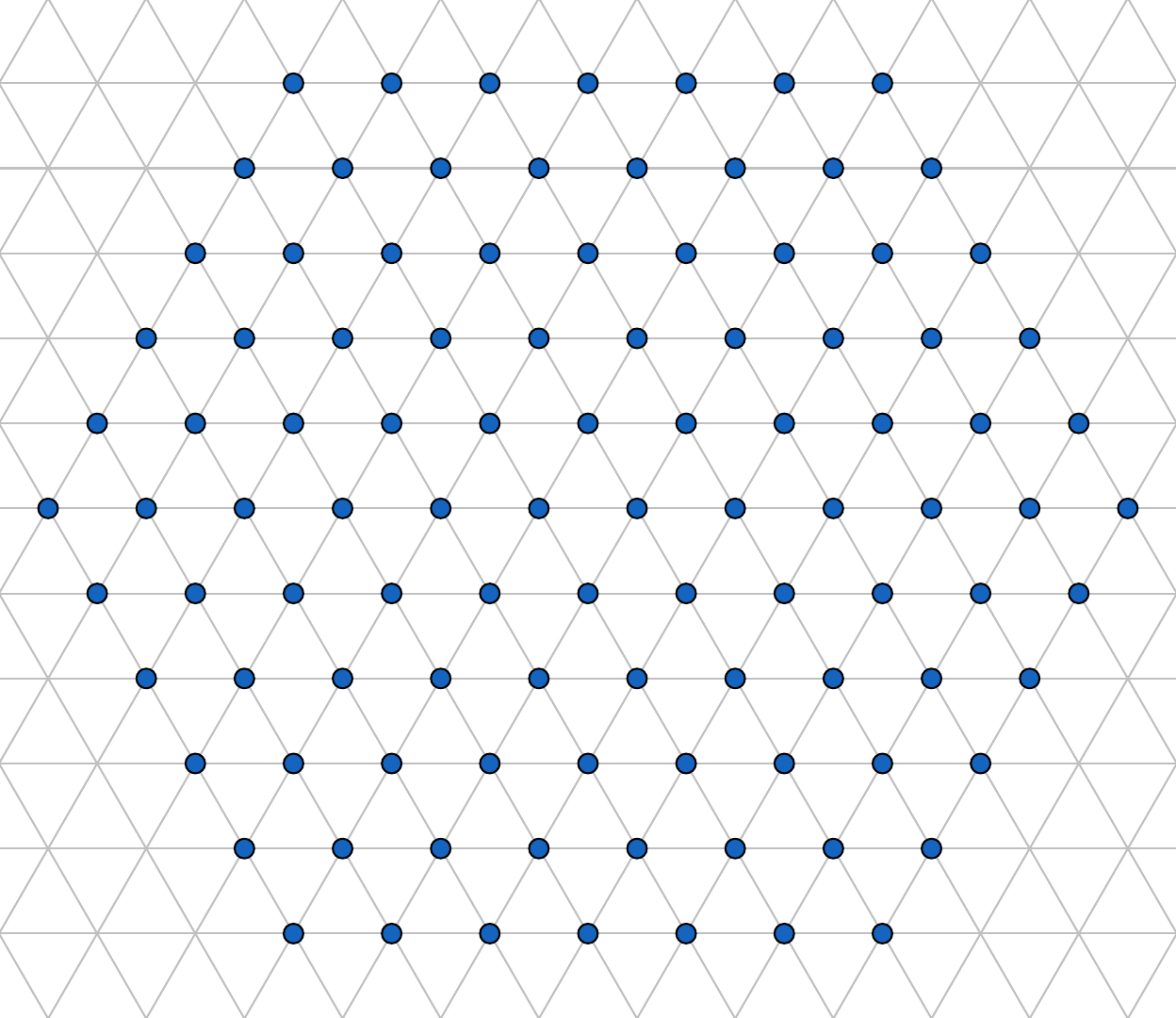}
  \caption{$k=40$, $n=102$}
  \label{40}
\end{subfigure}
\caption{These two configurations show that $g(k)>2k+1$ for $k\in \{30,31,32,33,45\}$.}\label{fig:test2}  
\end{figure} 
 


\newpage

\section{Numerical Data for Lattice Configurations} \label{data}

\subsection{Erd\H{o}s-Fishburn data revisited} We begin by presenting corrected and slightly expanded versions of the data tables from \cite{EF} containing the number of points and distinct distances determined by hexagonal arrays in $L_{\Delta}$ and square arrays in $L_{\Box}$.
\begin{center}
\begin{table}[H]
\caption{Number of points, $n$, and distinct distances, $k$, determined by hexagonal and square arrays in $L_{\Delta}$ and $L_{\Box}$, respectively. Corrected data is in bold, previously reported data from \cite{EF} is in parentheses. Data for $s=22,23$ for $L_\Delta$ and $36\leq s \leq 39$ for $L_{\Box}$ is new.}
\label{correct}
\begin{minipage}[c]{.48\linewidth}
\renewcommand{\arraystretch}{1.5}
\begin{tabular}{|| c | c | c ||| c | c | c ||}
\hline
\hline
\multicolumn{6}{||c||}{$H_s\subseteq L_\Delta$}\\ 
\hline
$n$ & $k$ & $s$ & $n$ & $k$ & $s$\\
\Xhline{0.8pt}
7&3&2&469&150&13\\
\hline
19&8&3&547&\textbf{172} (173)&14\\
\hline
37&15&4&631&\textbf{196} (197)&15\\
\hline
61&23&5&721&\textbf{222} (223)&16\\
\hline
91&34&6&817&\textbf{249} (250)&17\\
\hline 
127&46&7&919&\textbf{277} (280)&18\\
\hline
169&59&8&1027&\textbf{308} (312)&19\\
\hline
217&74&9&1141&\textbf{339} (345) &20\\
\hline
271&90&10&1261&\textbf{372} (382)&21\\
\hline
331&109&11&1387&405 &22\\
\hline
397&129&12&1519&440 &23\\
\hline

\hline
\end{tabular}
\end{minipage}
\begin{minipage}[c]{.48\linewidth}
\renewcommand{\arraystretch}{1.5}
\begin{tabular}{|| c | c | c ||| c | c | c ||}
\hline
\hline
\multicolumn{6}{||c||}{$s\times s$ square array in $L_\Box$}\\
\hline
$n$ & $k$ & $s$ & $n$ & $k$ & $s$\\
\Xhline{0.8pt}
4&2&2&441&\textbf{197} (194)&21\\
\hline
9&5&3&484&\textbf{215} (212)&22\\
\hline
16&9&4&529&\textbf{234} (228)&23\\
\hline
25&\textbf{14} (13)&5&576&\textbf{251} (248) &24\\
\hline
36&19&6&625&\textbf{272} (268)&25\\
\hline
49&\textbf{26} (25)&7&676&\textbf{293} (288)&26\\
\hline
64&\textbf{33}(32)&8&729&\textbf{314} (309)&27\\
\hline
81&\textbf{41}(40)&9&784&\textbf{336} (331)&28\\
\hline
100&\textbf{50}(49)&10&841&\textbf{359} (352)&29\\
\hline
121&\textbf{60} (58)&11&900&\textbf{381} (377) &30\\
\hline
144&\textbf{70} (69)&12&961&\textbf{407} (400)&31\\
\hline
169&\textbf{82} (80)&13&1024&\textbf{430} (425)&32\\
\hline
196&\textbf{93} (91)&14&1089&\textbf{456} (451)&33\\
\hline
225&\textbf{105} (104)&15&1156&\textbf{483} (474)&34\\
\hline
256&\textbf{119} (118)&16&1225&\textbf{507} (501)&35\\
\hline
289&\textbf{134} (130)&17&1296&535&36\\
\hline
324&\textbf{147} (146)&18&1369&566&37\\
\hline
361&\textbf{164} (160)&19&1444&594&38\\
\hline
400&\textbf{179} (177) &20&1521&623&39\\
\hline
\hline
\end{tabular}
\end{minipage}
\vspace{-4mm}
\end{table}
\end{center}

\noindent Data was collected using brute force searches with nested for-loops in Java. Specifically, we used that distances in the $s\times s$ square array in $L_{\Box}$ take the form $\sqrt{a^2+b^2}$ for $0\leq b\leq a \leq s-1$, while by Lemma \ref{trans}, distances in $H_s$ take the form $\sqrt{(a+\frac{1}{2}b)^2+(\frac{\sqrt{3}}{2}b)^2}=\sqrt{a^2+ab+b^2}$ for the pairs $\langle a,b \rangle$ indicated in the proof of Lemma \ref{triup}. In focusing on instances where the two arrays have approximately the same number of points, Erd\H{o}s and Fishburn indicate that $H_s$ determines about 26\% fewer distances. However, with the corrected table, using the respective $n$-values $(169,169), $  $(1027,1024)$, and the newly collected pair $(1519,1521)$, we find savings between 28\% and 29.4\% in the number of distinct distances determined by the hexagonal arrays in $L_{\Delta}$ versus the square arrays in $L_{\Box}$.

\subsection{A heuristic for $L_{\Delta}$ versus $L_{\Box}$} \label{heuristic} To gain a better understanding of the savings in distinct distances in $L_{\Delta}$ as compared to $L_{\Box}$, we start with two questions:

\begin{itemize} \item[(a)] How much ``less dense" than $L_{\Delta}$ is $L_{\Box}$? In other words, if a nice region  contains $n_1$ points of $L_{\Delta}$ and $n_2$ points of $L_{\Box}$, what would we expect $n_2/n_1$ to be?\\ \item[(b)] How much ``sparser" are the distances determined by $L_{\Delta}$ than the distances determined by $L_{\Box}$? In other words, if $k_1$ distances determined by $L_{\Delta}$ lie in some interval $(0,r]$, while $k_2$ distances determined by $L_{\Box}$ lie in $(0,r]$, what would we expect $k_1/k_2$ to be?

\end{itemize}

 Multiplying these two expectations together yields an expectation for $(n_2/k_2)/(n_1/k_1)$, which compares the ``efficiency" of $L_{\Box}$ to that of $L_{\Delta}$ with regard to maximizing the number of ``points per distinct distance". In particular, extending this predicted ratio beyond cases where the region determining $n_1$ and $n_2$ are the same, and in particular to cases where $n_1=n_2$, yields a prediction for $k_1/k_2$ in such cases.

 Of the two, Question (a) is the more straightforward, and is answered by considering the \textit{covolumes} of the lattices. Specifically, the number of points of $L_{\Box}$ lying inside of any nice region can be well-approximated by drawing a $1\times 1$ square centered at each point, yielding that $n_2$ is very close to the area of the region. For $L_{\Delta}$, we can instead draw parallelograms spanned by the vectors $(1,0)$ and $(1/2,\sqrt{3}/2)$ centered at each point, which have area $\sqrt{3}/2$, hence $n_1$ is approximately the area of the region divided by $\sqrt{3}/2$. Therefore, we predict that $n_2/n_1$ is approximately $\sqrt{3}/2$.

 Question (b) is in fact a question of number theory, specifically binary quadratic forms, as distances in $(0,r]$ determined by $L_{\Box}$ are in correspondence with integers $1\leq n \leq r^2$ that can be represented as $n=a^2+b^2$ for $a,b\in \Z$. Meanwhile, as discussed above, distances in $(0,r]$ determined by $L_{\Delta}$ are in correspondence with integers $1\leq n \leq r^2$ that can be represented as $n=a^2+ab+b^2$ for $a,b\in \Z$. 

\begin{rem} While our framing here is somewhat purpose-built for the Erd\H{o}s-Fishburn problem, the interested reader should note that these inquiries are closely related to a conjecture of Schmutz Schaller \cite{SS}, roughly stating that the triangular lattice determines the fewest distances of all lattices, resolved partially by Moree and te Riele \cite{Moree2} and later fully by Moree and Osburn \cite{Moree1}. \end{rem}

 It is known (see for example \cite{Moree1}) that the number of integers $1\leq n \leq N$ representable as $n=a^2+b^2$ is approximately $c N/\sqrt{\log N} $, where $$c=\left(\frac{1}{2}\cdot \prod_{p\equiv 3  (\text{mod }4)} \frac{p^2}{p^2-1} \right)^{1/2}\approx 0.764223654 $$ is known as the Landau-Ramanujan constant. Similarly (again see  \cite{Moree1}), the number of integers $1\leq n \leq N$ representable as $a^2+ab+b^2$ is approximately $c'N/\sqrt{\log N}$, where \begin{equation}\label{cprime} c'=\left(\frac{1}{2\sqrt{3}} \prod_{p\equiv 2  (\text{mod }3)} \frac{p^2}{p^2-1}\right)^{1/2}\approx 0.638909405,  \end{equation} and therefore we predict $k_1/k_2\approx c'/c \approx 0.83602. $

 Combining these considerations, we expect that if configurations in $L_{\Delta}$ and $L_{\Box}$ are optimal within their respective lattices, contain approximately the same number of points, and determine $k_1$ and $k_2$ distinct distances, respectively, then  $$\frac{k_1}{k_2}\approx \frac{\sqrt{3}}{2}\cdot\frac{c'}{c} \approx 0.72402. $$

 This hypothesized $27.6\%$ saving for $L_{\Delta}$ as compared to $L_{\Box}$ is close to and between the savings observed in \cite{EF} and the corrected data in Table \ref{correct}. However, neither the hexagonal arrays in $L_{\Delta}$ nor the square arrays in $L_{\Box}$ are known to be optimal in their respective lattices. The following section both rigorizes our heuristic in a special case, and introduces alternative candidates for optimal lattice configurations.

\subsection{Lattice disks} \label{disksec} The heuristic outlined in Section \ref{heuristic} makes some leaps. For example, the general geometric forms of optimal configurations in $L_{\Delta}$ and $L_{\Box}$ might be quite different, which would cast some doubt on the precision of the Question (a) analysis. Further, a lattice configuration does not have to determine \textit{every} distance determined in the full lattice lying in a particular interval, which makes the Question (b) analysis tenuous as well. 

However, there is a family of lattice configurations that completely alleviate these concerns, and also exploit rotational symmetry even more so than our previous candidate configurations: intersections of $L_{\Delta}$ and $L_{\Box}$ with large disks centered at the origin, which we refer to as \textit{lattice disks}. 

 With this in mind, we fix $n\in \N$. As discussed in Section \ref{heuristic}, the intersection of a disk of radius $r_1$ with $L_{\Delta}$ contains about $2\pi r_1^2/\sqrt{3}$ points. Setting this equal to $n$ yields $r_1=(\sqrt{3}n/(2\pi))^{1/2}$. Similarly, the intersection of a disk of radius $r_2$ with $L_{\Box}$ contains about $\pi r_2^2$ points, and setting this equal to $n$ yields $r_2=(n/\pi)^{1/2}$. A very slight overestimate for the number of distinct distances determined by the $L_{\Delta}$ disk is the number of integers $1\leq j \leq 4r_1^2=2\sqrt{3}n/\pi$ representable as $j=a^2+ab+b^2$. The missing exceptions stem from distances $\sqrt{a^2+ab+b^2}\leq 2r_1$ that cannot be translated to occur between two points of $L_{\Delta}$ within distance $r_1$ of the origin. Such distances are at least $2r_1-1$, so there are fewer than $4r_1<3\sqrt{n}$ of them.  

Therefore, the number of distances determined by the $L_{\Delta}$ disk is \begin{equation}\label{k1form} k_1= c'\frac{2\sqrt{3}n}{\pi\sqrt{\log n}}(1+o(1)), \end{equation} where $c'$ is as defined in Section \ref{heuristic} and the little-oh notation refers to $n$ tending to infinity. Similarly, the number of distances determined by the $L_{\Box}$ disk is $$k_2 = c\frac{4n}{\pi \sqrt{\log n}}(1+o(1)), $$ where $c$ is the Landau-Ramanujan constant, and both lattice disks contain $n+o(n)$ points. In this case we have, via rigorous argument rather than heuristic, that $$\frac{k_1}{k_2} = \frac{\sqrt{3}}{2} \cdot \frac{c'}{c} + o(1). $$

\subsection{Data for large lattice arrays and disks} The observations from the previous section are particularly interesting if we believe that lattice disks are good candidates for optimal or near-optimal sets with regard to the Erd\H{o}s-Fishburn problem or the original Erd\H{o}s distinct distance problem, or even if we believe them to be good candidates for optimal subsets of their respective lattices. To collect some evidence on this matter, we compare extensive data for the four families of lattice configurations on which we have focused: hexagonal arrays in $L_{\Delta}$, square arrays in $L_{\Box}$, $L_{\Delta}$ disks, and $L_{\Box}$ disks.

 To enhance our comparisons, we focus on the cases where these four arrays all have approximately the same numbers of points. As a starting point, we search via computer for values of $s$ such that $n_1=3s^2-3s+1$ is close to a perfect square, in the sense that $\sqrt{n_1}$ is within $.01$ of an integer $s_2$. Then, we let $n_2=s_2^2$, and the hexagonal array with $s$ points on a side and the square array with $s_2$ points on a side contain $n_1$ and $n_2$ points, respectively. Then, as calculated in the previous section, we let $r_1= (\sqrt{3}n_1/(2\pi))^{1/2}$ and we construct the $L_{\Delta}$ disk of radius $r_1$, which contains $n_3\approx n_2 \approx n_1$ points. Finally, we let $r_2=(n_1/\pi)^{1/2}$, construct the $L_{\Box}$ disk of radius $r_2$ containing $n_4\approx n_3 \approx n_2 \approx n_1$ points, and we compute the number of distances $k_1,k_2,k_3,k_4$, respectively, determined by each configuration. 

We compute $k_1$ and $k_2$ as outlined in Section \ref{data}, and we compute $k_3$ and $k_4$ via a brute force calculation of the number of integers $1\leq j \leq 4r_1^2$ representable as $a^2+ab+b^2$ and the number of integers $1\leq j \leq 4r_2^2$ representable as $a^2+b^2$, respectively. As discussed in Section \ref{disksec}, $k_3$ and $k_4$ are actually very slight overestimates for the number of distinct distances in the lattice disks, with relative error decaying quickly to $0$, and the error actually works in favor of our eventual conclusions and conjectures.  The following table displays the results of this data collection.
 
\begin{center}
\tiny 
\setlength{\extrarowheight}{2pt}
\begin{longtable}{||| p{1cm} | p{1cm} | p{1cm} || p{1cm} | p{1cm} | p{1cm} || p{1cm} | p{1cm} || p{1cm} | p{1cm} |||}
\caption{Number of points, $n_1,n_2,n_3,n_4$, and distinct distances, $k_1,k_2,k_3,k_4$, determined by hexagonal arrays in $L_{\Delta}$, square arrays in $L_{\Box}$, $L_{\Delta}$ intersected with a disk centered at the origin, and $L_{\Box}$ intersected with a disk centered at the origin, respectively.}\\
\hline
\multicolumn{3}{||| c ||}{$H_s\subseteq L_\Delta$}&\multicolumn{3}{c||}{$s\times s$ square in $L_\Box$}&\multicolumn{2}{c||}{$L_\Delta$ Disk }&\multicolumn{2}{c|||}{$L_\Box$ Disk}\\
\hline
$s$ & $n_1$ & $k_1$ & $s_2$ & $n_2$ & $k_2$ & $n_3$ & $k_3$ & $n_4$ & $k_4$\\
\hline
\hline
\endfirsthead
\hline
$s$ & $n_1$ & $k_1$ & $s_2$ & $n_2$ & $k_2$ & $n_3$ & $k_3$ & $n_4$ & $k_4$\\
\hline
\hline
\endhead
23&1519&440&39&1521&623&1519&441&1513&601\\
\hline
34&3367&925&58&3364&1310&3369&920&3360&1251\\
\hline
38&4219&1139&65&4225&1620&4217&1130&4216&1541\\
\hline
49&7057&1844&84&7056&2628&7059&1818&7049&2486\\
\hline
64&12097&3063&110&12100&4378&12094&3008&12083&4116\\
\hline
75&16651&4136&129&16641&5923&16634&4055&16641&5552\\
\hline
79&18487&4572&136&18496&6558&18482&4475&18480&6122\\
\hline
90&24031&5847&155&24025&8397&24036&5725&24010&7836\\
\hline
105&32761&7841&181&32761&11291&32755&7663&32759&10496\\
\hline
120&42841&10115&207&42849&14568&42848&9870&42841&13528\\ 
\hline
131&51091&11958&226&51076&17246&51097&11661&51096&15986\\
\hline
135&54271&12660&233&54289&18268&54263&12354&54248&16919\\
\hline
146&63511&14707&252&63504&21196&63519&14325&63509&19640\\
\hline
161&77281&17716&278&77284&25597&77289&17253&77268&23658\\
\hline
172&88237&20099&297&88209&29034&88230&19574&88223&26838\\
\hline
176&92401&21007&304&92416&30348&92406&20446&92332&28029\\
\hline
187&104347&23588&323&104329&34095&104352&22949&104340&31468\\
\hline
191&108871&24557&330&108900&35517&108864&23888&108869&32759\\
\hline
202&121807&27333&349&121801&39539&121812&26580&121785&36454\\
\hline
217&140617&31345&375&140625&45371&140619&30474&140616&41797\\
\hline
228&155269&34463&394&155236&49901&155273&33482&155260&45930\\
\hline
232&160777&35627&401&160801&51610&160771&34614&160760&47489\\
\hline
243&176419&38926&420&176400&56379&176421&37815&176391&51880\\
\hline
247&182287&40162&427&182329&58217&182317&39015&182265&53527\\
\hline
258&198919&43663&446&198916&63291&198916&42405&198912&58161\\
\hline
273&222769&48642&472&222784&70564&222768&47234&222761&64804\\
\hline
284&241117&52465&491&241081&76114&241114&50935&241093&69888\\
\hline
288&247969&53901&498&248004&78211&247946&52316&247959&71786\\
\hline
299&267307&57926&517&267289&84051&267323&56199&267302&77117\\
\hline
314&294847&63614&543&294849&92358&294851&61715&294821&84691\\
\hline
329&323737&69586&569&323761&101045&323735&67473&323676&92604\\
\hline
340&345781&74110&588&345744&107620&345756&71853&345771&98630\\
\hline
344&353977&75796&595&354025&110084&353981&73485&353961&100859\\
\hline
355&377011&80509&614&376996&116943&376986&78044&376976&107118\\
\hline
370&409591&87167&640&409600&126667&409562&84483&409575&115964\\
\hline
381&434341&92219&659&434281&133986&434343&89356&434308&122650\\
\hline
385&443521&94093&666&443556&136760&443552&91166&443497&125135\\
\hline
396&469261&99288&685&469225&144351&469249&96221&469208&132091\\
\hline
400&478801&101248&692&478864&147198&478792&98093&478776&134661\\
\hline
411&505531&106665&711&505521&155110&505541&103325&505521&141834\\
\hline
426&543151&114254&737&543169&166216&543137&110682&543138&151934\\
\hline
437&571597&119982&756&571536&174558&571633&116212&571602&159538\\
\hline
441&582121&122120&763&582169&177705&582125&118258&582072&162357\\
\hline
452&611557&128035&782&611524&186289&611562&123979&611530&170223\\
\hline
467&652867&136335&808&652864&198432&652878&131989&652825&181216\\
\hline
482&695527&144892&834&695556&210885&695520&140239&695508&192565\\
\hline
493&727669&151302&853&727609&220271&727659&146454&727647&201095\\
\hline
497&739537&153654&860&739600&223722&739555&148736&739542&204224\\
\hline
508&772669&160264&879&772641&233368&772601&155110&772639&212981\\
\hline
523&819019&169514&905&819025&246864&819023&164028&818990&225234\\
\hline
534&853867&176436&924&853776&256919&853874&170719&853843&234419\\
\hline
538&866719&178995&931&866761&260727&866686&173179&866699&237799\\
\hline
549&902557&186081&950&902500&271056&902576&180050&902467&247218\\
\hline
553&915769&188706&957&915849&274937&915768&182569&915747&250711\\
\hline
564&952597&195999&976&952576&285616&952579&189601&952567&260389\\
\hline
579&1003987&206178&1002&1004004&300439&1004013&199414&1003960&273859\\
\hline
590&1042531&213780&1021&1042441&311537&1042535&206744&1042495&283924\\
\hline
594&1056727&216578&1028&1056784&315648&1056726&209449&1056677&287637\\
\hline
605&1096261&224343&1047&1096209&327044&1096288&216960&1096225&297965\\
\hline
609&1110817&227205&1054&1110916&331216&1110835&219713&1110783&301764\\
\hline
620&1151341&235163&1073&1151329&342863&1151324&227412&1151304&312340\\
\hline
635&1207771&246273&1099&1207801&359046&1207777&238116&1207741&327015\\
\hline
646&1250011&254539&1118&1249924&371136&1249983&246085&1250012&337989\\
\hline
650&1265551&257572&1125&1265625&375609&1265572&249021&1265528&342035\\
\hline
661&1308781&266045&1144&1308736&387968&1308792&257188&1308692&353251\\
\hline
676&1368901&277784&1170&1368900&405139&1368882&268534&1368881&368843\\
\hline
691&1430371&289825&1196&1430416&422777&1430355&280107&1430362&384729\\
\hline
702&1476307&298749&1215&1476225&435779&1476291&288738&1476282&396591\\
\hline
706&1493191&302062&1222&1493284&440674&1493181&291932&1493127&400964\\
\hline
717&1540117&311203&1241&1540081&454022&1540131&300711&1540106&413066\\
\hline
732&1605277&323826&1267&1605289&472567&1605270&312938&1605233&429846\\
\hline
743&1653919&333283&1286&1653796&486344&1653950&322046&1653912&442387\\
\hline
747&1671787&336742&1293&1671849&491469&1671808&325387&1671718&446972\\
\hline
758&1721419&346375&1312&1721344&505556&1721389&334664&1721383&459723\\
\hline
762&1739647&349927&1319&1739761&510746&1739653&338071&1739625&464399\\
\hline
773&1790269&359715&1338&1790244&525054&1790299&347523&1790221&477381\\
\hline
788&1860469&373283&1364&1860496&544913&1860505&360604&1860478&495385\\
\hline
799&1912807&383420&1383&1912689&559698&1912800&370371&1912756&508782\\
\hline
803&1932019&387140&1390&1932100&565161&1932048&373949&1932003&513712\\
\hline
814&1985347&397407&1409&1985281&580262&1985338&383870&1985309&527326\\
\hline
818&2004919&401149&1416&2005056&585848&2004941&387510&2004882&532335\\
\hline
829&2059237&411672&1435&2059225&601097&2059246&397582&2059183&546210\\
\hline
844&2134477&426140&1461&2134521&622344&2134456&411547&2134454&565400\\
\hline
855&2190511&436926&1480&2190400&638008&2190527&421951&2190443&579686\\
\hline
859&2211067&440867&1487&2211169&643919&2211058&425754&2211025&584894\\
\hline
870&2268091&451826&1506&2268036&659933&2268113&436313&2268056&599420\\
\hline
885&2347021&466988&1532&2347024&682105&2347029&450917&2347002&619478\\
\hline
900&2427301&482326&1558&2427364&704669&2427293&465744&2427243&639850\\
\hline
911&2487031&493838&1577&2486929&721402&2487049&476764&2487014&655022\\
\hline
915&2508931&497986&1584&2509056&727521&2508908&480809&2508918&660584\\
\hline
926&2569651&509611&1603&2569609&744588&2569702&491991&2569595&675978\\
\hline
941&2653621&525623&1629&2653641&768155&2653614&507466&2653571&697226\\
\hline
952&2716057&537570&1648&2715904&785494&2716052&518955&2716009&713014\\
\hline
956&2738941&541948&1655&2739025&792020&2738981&523156&2738918&718819\\
\hline
967&2802367&554074&1674&2802276&809668&2802353&534812&2802306&734811\\
\hline
971&2825611&558517&1681&2825761&816285&2825597&539085&2825575&740685\\
\hline
982&2890027&570769&1700&2890000&834110&2890025&550893&2890023&756949\\
\hline
997&2979037&587712&1726&2979076&858983&2979000&567254&2979027&779382\\
\hline

\end{longtable}

\vspace{-7mm}

\end{center}

\noindent As our configurations grow, the originally investigated ratio $k_1/k_2$ decreases well below our previous observations and indicates at least a $31.5\%$ saving for hexagonal arrays in $L_{\Delta}$ versus square arrays in $L_{\Box}$. We know that the ratio $k_3/k_4$ must converge to $\frac{\sqrt{3}}{2}\cdot \frac{c'}{c}\approx 0.724$, and our largest data point yields a ratio of about $0.728$. The speed of the convergence is somewhat at the mercy of the convergence of the approximations for the density of the images of the respective binary quadratic forms, which is known to be quite slow. 

 Importantly, we see that for large configurations, $k_3$ is notably smaller than $k_1$ (about a $3.5\%$ saving, and climbing), and $k_4$ is notably smaller than $k_2$ (over a $9\%$ saving, and climbing). The fact that switching from a square to a disk had a bigger impact in $L_{\Box}$ than that of switching from a hexagon to a disk in $L_{\Delta}$ can probably be attributed to a greater increase in rotational symmetry. This discrepancy explains why $k_1/k_2$ falls well below $k_3/k_4$, and well below our heuristic ratio.

 Based on this data, and our intuition regarding the advantages of rotational symmetry over well-structured arrays, we conjecture that, with regard to minimizing distinct distances for a fixed number of points, or equivalently maximizing the number of points for a fixed number of distinct distances, the $L_{\Delta}$ and $L_{\Box}$ disks, or efficient subsets thereof, are the optimal configurations within their respective lattices. Combining this belief with Conjecture \ref{efcon} and the formulas (\ref{cprime}) and (\ref{k1form}), we conclude our discussion with the following detailed conjecture on the original Erd\H{o}s distinct distance problem. 
 
 \

\begin{conjecture} If $f(n)$ is the minimum number of distinct distances determined by $n$ points in a plane, then $$f(n) =c\frac{n}{\sqrt{\log n}}(1+o(1)),$$ where $$c=\frac{1}{\pi}\left(2\sqrt{3} \prod_{p \equiv 2 \textnormal{ mod }3} \frac{p^2}{p^2-1} \right)^{1/2}\approx 0.704498\dots $$
 
\end{conjecture}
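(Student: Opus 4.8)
The plan is to match the asserted asymptotic from both sides, treating the upper bound as an explicit construction and the lower bound as the genuine difficulty. For the upper bound, I would simply exhibit the $L_\Delta$ disk of the appropriate radius as an $n$-point set. Given $n$, set $r_1 = (\sqrt{3}n/(2\pi))^{1/2}$ so that the disk of radius $r_1$ meets $L_\Delta$ in $n + o(n)$ points, then pass to a subset of exactly $n$ points by discarding $o(n)$ boundary points, which perturbs the distance count only by $o(n/\sqrt{\log n})$. Formula (\ref{k1form}) then gives that this set determines $c' \cdot 2\sqrt{3}n/(\pi\sqrt{\log n})(1+o(1))$ distances, and a short simplification (exactly as in Section \ref{disksec}) shows that $c' \cdot 2\sqrt{3}/\pi$ equals the constant $c$ appearing in the statement. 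Hence $f(n) \le c n/\sqrt{\log n}(1+o(1))$, and this direction rests only on the density estimate for the form $a^2 + ab + b^2$ recorded via (\ref{cprime}), which is already known from \cite{Moree1}.

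For the lower bound, the plan is to reduce the problem in two stages: first from arbitrary planar point sets to lattice subsets, and then from lattice subsets to lattice disks. The first stage is precisely an asymptotic form of Conjecture \ref{efcon}: one would need to show that any near-optimal configuration is, up to $o(n)$ points, congruent to a subset of a lattice, and moreover that among all lattices the triangular lattice $L_\Delta$ is the most efficient. The latter claim is where the number theory of binary quadratic forms enters, since the constant $c'$ attached to $a^2 + ab + b^2$ is smaller than the analogous constant for any other positive definite integral form; this is essentially the content of the Schmutz Schaller conjecture resolved in \cite{SS} and \cite{Moree1}. The second stage would require proving that, within $L_\Delta$, the disk minimizes the distance count for a fixed point count up to the leading constant, for which the data of this section is suggestive but not decisive.

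The hard part will be the first stage of the lower bound, namely establishing even an asymptotic version of Conjecture \ref{efcon}. Current technology stops well short of this: the Guth--Katz theorem \cite{GuthKatz} yields $f(n) = \Omega(n/\log n)$, which is off by a factor of $\sqrt{\log n}$ from the conjectured truth and, more importantly, gives no information about the leading constant or the geometry of extremal sets. Pinning down the constant would require an entirely new structural (rigidity) theorem asserting that distance-minimizing sets must be lattice-like, a statement for which no mechanism is presently known. I therefore expect the upper bound to be provable in full with the tools developed here, while the matching lower bound --- and hence the conjecture itself --- is likely to remain out of reach until the rigidity of near-optimal configurations can be controlled.
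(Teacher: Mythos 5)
This statement is a conjecture, and the paper offers no proof of it --- only the heuristic assembly of Section \ref{disksec}: the rigorous upper bound coming from the $L_\Delta$ disk together with the density asymptotic (\ref{cprime})--(\ref{k1form}), plus the unproven beliefs that disks are optimal within $L_\Delta$ and that $L_\Delta$ hosts optimal configurations (Conjecture \ref{efcon}). Your assessment matches this exactly, including the correct verification that $c' \cdot 2\sqrt{3}/\pi$ equals the stated constant and the correct identification of the lower bound (the structural rigidity of near-optimal sets) as the part that is genuinely out of reach; the only slip is attributing the resolution of the Schmutz Schaller conjecture to \cite{SS} (where it was posed) rather than to \cite{Moree2} and \cite{Moree1}.
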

 



\noindent \textbf{Acknowledgements:} This research was initiated during the Summer 2019 Kinnaird Institute Research Experience at Millsaps College. All authors were supported during the summer by the Kinnaird Endowment, gifted to the Millsaps College Department of Mathematics. At the time of submission, all authors except Alex Rice were Millsaps College undergraduate students. The authors would like to thank Pieter Moree and an anonymous referee for their helpful comments and references.

\end{document}